\title{The Three 'R's and Dual Riordan Arrays}
\author{Thomas M. Richardson }
\date{February 2017}
\newtheorem{thm}{Theorem}[section]
\newtheorem{lem}{Lemma}[section]
\newtheorem{corr}{Corollary}[section]
\theoremstyle{definition}
\newtheorem{defin}{Definition}[section]
\newcommand{\convpow}[3]{#1_{#2}^{(#3)}}
\newcommand{\truncpow}[3]{#1_{#2}^{\left \{{#3} \right \}}}
\begin{document}

\maketitle

\begin{abstract}
\end{abstract}

\section{Introduction}
The three 'R's of the title are: reversion of series, recurrence relations, and Riordan arrays. In this paper we describe how the reversion of a series is related to convolutional recurrence relations for the series. We also show how the recurrence relation can be seen in the context of Riordan arrays. The connection with Riordan arrays includes a relation between reversion, inversion, and the A-sequence of Riordan arrays.

The motivation for doing this is to provide a clear statement of relationship between reversions and recurrences. Such a statement would have provided a helpful reference for the author's paper on the super Patalan numbers \cite[equation (14)]{SUPERPAT_PUBLISHED}.
As an example of the theorem that we prove in this paper, we also give a second convolutional recurrence for the Patalan numbers.

A second aspect of the relationship with Riordan arrays is that the doubly infinite matrix that is called an extended Super Patalan matrix in \cite{SUPERPAT_PUBLISHED}, is an example of a recursive matrix, as defined by Luz\'{o}n et. al. \cite{LMMS}. Luz\'{o}n et. al. also defined the concept of a dual Riordan array, and we describe the Riordan arrays and dual Riordan arrays that are related to the super Patalan numbers.

\section{Reversion of series}
\begin{defin}
Let $A(x) = \sum_{k \ge 0} a_k x^k$. The {\em reversion} of $A$ is a series $B(x) = \sum_{k \ge 0} b_k x^k$ such that $B(A(x)) = x$.
\end{defin}

Since polynomial multiplication corresponds to convolution of the coefficients, it is not terribly surprising that one can get recurrence relations involving convolution from reversion. One does have to be moderately careful in transforming the equation $B(A(x))=x$ into a recurrence relation on the coeffecients of $A$.

Following standard notation, we let $[x^n]P(x)$ be the coefficient of $x^n$ in the power series $P(x)$.
We observe that $B(A(x)) = \sum_{k \ge 0} b_k A(x)^k$.

\begin{lem}
\label{COEFFLEM}
Let $\convpow{a}{n}{k} = [x^n]A(x)^k$. Then
\begin{equation}
\label{COEFFEQN}
\convpow{a}{n}{k} = \sum_{i_1+\dots+i_k=n}\prod_{j=1}^k a_{i_j}.
\end{equation}
\end{lem}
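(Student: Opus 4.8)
The plan is to prove \eqref{COEFFEQN} by direct expansion of the product $A(x)^k$ and reading off the coefficient of $x^n$; I will also indicate how the same identity follows by induction on $k$ via the Cauchy product, since that route makes the formal bookkeeping especially transparent.

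First I would write $A(x)^k = \prod_{j=1}^k \left( \sum_{i_j \ge 0} a_{i_j} x^{i_j} \right)$, using a separate summation index $i_j$ for each of the $k$ factors. Expanding by distributivity, a generic term arises from choosing one summand $a_{i_j} x^{i_j}$ out of each factor, contributing $\left( \prod_{j=1}^k a_{i_j} \right) x^{i_1 + \dots + i_k}$. Collecting all such choices gives $A(x)^k = \sum_{i_1, \dots, i_k \ge 0} \left( \prod_{j=1}^k a_{i_j} \right) x^{i_1 + \dots + i_k}$, and grouping the terms according to the total degree $i_1 + \dots + i_k = n$ isolates the coefficient of $x^n$ as exactly the stated sum.

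The one point requiring care is the formal justification of this rearrangement, namely that $\convpow{a}{n}{k}$ is genuinely a finite sum. For each fixed $n$ the constraint $i_1 + \dots + i_k = n$ with $i_j \ge 0$ admits only finitely many solutions (the weak compositions of $n$ into $k$ parts), so only finitely many terms above contribute to $[x^n]A(x)^k$; this is precisely what makes the coefficient well defined as a formal power series operation and what licenses the regrouping. I expect this bookkeeping, rather than any computation, to be the only genuine obstacle.

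As a cross-check and alternative, I would argue by induction on $k$. The case $k = 1$ is immediate, since $\convpow{a}{n}{1} = a_n$ matches the one-term sum over $i_1 = n$. For the inductive step I would write $A(x)^{k+1} = A(x)^k \cdot A(x)$ and apply the Cauchy product, $\convpow{a}{n}{k+1} = \sum_{m=0}^n \convpow{a}{m}{k} \, a_{n-m}$; substituting the inductive hypothesis for $\convpow{a}{m}{k}$ and absorbing the extra index then recovers \eqref{COEFFEQN} with $k$ replaced by $k+1$. If one prefers to begin at $k = 0$, the empty product and empty sum make both sides equal $\delta_{n,0}$, giving the base case there.
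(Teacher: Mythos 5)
Your proof is correct. The paper states Lemma \ref{COEFFLEM} without any proof at all, treating it as the standard fact that multiplication of formal power series corresponds to convolution of coefficients (as remarked just before the lemma), and your direct expansion of $\prod_{j=1}^k \bigl( \sum_{i_j \ge 0} a_{i_j} x^{i_j} \bigr)$ with the finiteness observation about weak compositions is exactly the standard justification the paper implicitly relies on. Your alternative inductive route via the Cauchy product is also sound, and in fact mirrors the paper's own later (also unproven) recursion \eqref{CONVEQN}, so both of your arguments are consistent with how the paper organizes this material.
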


The summation in equation \eqref{COEFFEQN} is over all degree sequences of length $k$ that sum to $n$.

\begin{lem}
If $B(x)$ is the reversion of the series $A(x)$, then $B(a_0) = 0$.
\end{lem}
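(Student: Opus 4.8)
The plan is to extract the constant term from the defining equation $B(A(x)) = x$. Writing the composition as $B(A(x)) = \sum_{k \ge 0} b_k A(x)^k$, as observed just before the statement, I would compare the coefficient of $x^0$ on each side. On the right-hand side, $[x^0]x = 0$. On the left-hand side, the constant term of $B(A(x))$ is the sum, over all $k$, of $b_k$ times the constant term of $A(x)^k$, so the whole argument hinges on identifying $[x^0]A(x)^k$.

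That identification is exactly where I would invoke Lemma \ref{COEFFLEM}. Taking $n = 0$ in \eqref{COEFFEQN}, the coefficient $\convpow{a}{0}{k}$ is a sum over all length-$k$ nonnegative degree sequences with $i_1 + \dots + i_k = 0$; the only such sequence is $i_1 = \dots = i_k = 0$, so the sum collapses to the single product $\prod_{j=1}^k a_0 = a_0^k$. Hence $[x^0]A(x)^k = a_0^k$, and a coefficient comparison gives
\begin{equation*}
0 = [x^0]\,B(A(x)) = \sum_{k \ge 0} b_k\, a_0^k = B(a_0),
\end{equation*}
which is the claim once one recognizes the final sum as $B$ evaluated at the scalar $a_0$.

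The main obstacle is conceptual rather than computational: one must make sense of the infinite sum $\sum_{k \ge 0} b_k a_0^k$, which is precisely the question of whether the composition $B(A(x))$ — and in particular its constant term — is well-defined when $a_0 \ne 0$. When $a_0 = 0$ every power $A(x)^k$ with $k \ge 1$ contributes nothing to the constant term, so the statement degenerates to $b_0 = 0$ and no convergence issue arises; when $a_0 \ne 0$ each $A(x)^k$ contributes $a_0^k$ and the constant term is a genuinely infinite sum. I would therefore note that the hypothesis that $B$ is the reversion of $A$ presupposes that this composition exists, so that $B(a_0)$ is a well-defined quantity and the coefficient comparison above is legitimate; in an analytic setting this is the convergence of $\sum_k b_k a_0^k$, while in a purely formal setting it is the condition built into the definition of reversion.
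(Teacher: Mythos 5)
Your proof is correct: the paper itself states this lemma without proof, and your argument --- extracting the coefficient of $x^0$ from $B(A(x))=x$, computing $[x^0]A(x)^k=a_0^k$ via Lemma \ref{COEFFLEM}, and recognizing $\sum_{k\ge 0} b_k a_0^k$ as $B(a_0)$ --- is exactly the $n=0$ analogue of the paper's own proof of Lemma \ref{B1LEM}, so it is evidently the intended argument. Your closing remark about the meaning of the infinite sum $\sum_{k\ge 0} b_k a_0^k$ when $a_0\ne 0$ is a legitimate subtlety the paper glosses over; it is harmless precisely in the paper's later applications, where $B$ is a polynomial and the statement that ``$a_0$ must be a root of $B(x)$'' is exactly this lemma.
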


\begin{lem}
\label{B1LEM}
If $B(x)$ is the reversion of the series $A(x)$, then $B'(a_0) = 1/a_1$.
\end{lem}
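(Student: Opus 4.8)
The plan is to start from the defining relation $B(A(x)) = x$ and differentiate, since the derivative $B'$ is exactly the object appearing in the conclusion. First I would apply the chain rule to the identity $B(A(x)) = x$, obtaining
\begin{equation}
B'(A(x)) \, A'(x) = 1.
\end{equation}
This converts the statement about the composition into one mixing $B'$ with the derivative of $A$, which is precisely the combination the lemma involves.

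Next I would evaluate at $x = 0$. From $A(x) = \sum_{k \ge 0} a_k x^k$ one reads off $A(0) = a_0$ and $A'(0) = a_1$, so substituting $x = 0$ into the displayed identity yields $B'(a_0) \, a_1 = 1$. Solving for $B'(a_0)$ gives the claimed value $1/a_1$, and incidentally makes explicit that the statement presupposes $a_1 \ne 0$, which is exactly the condition under which the reversion exists. A second route to the same conclusion is to regard $B$ as the functional inverse of $A$ and invoke the inverse function theorem, $B'(y) = 1/A'(B(y))$; evaluating at $y = a_0$ and using the previous lemma $B(a_0) = 0$ gives $B'(a_0) = 1/A'(0) = 1/a_1$. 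This alternative has the virtue of directly using the preceding result.

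The only point requiring genuine care — and the step I expect to be the main obstacle — is justifying the chain rule in this setting, since the previous lemma permits $a_0 \ne 0$, so $A(x)$ need not have zero constant term and $B \circ A$ is not a composition of formal power series in the usual substitution sense. I would address this either by treating $A$ and $B$ as analytic functions near the relevant points, where the chain rule and the inverse function theorem are standard, or by establishing the formal identity $B'(A(x)) A'(x) = 1$ directly from the convolution coefficient formula of Lemma~\ref{COEFFLEM} and then specializing. Once the differentiation is justified, the remainder is a single substitution, so the content of the lemma rests entirely on validating that one differentiation step.
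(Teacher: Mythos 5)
Your proof is correct in substance, but it takes a different route from the paper's. The paper never differentiates the composite at all: it expands $B(A(x)) = \sum_{k \ge 0} b_k A(x)^k$, extracts the coefficient of $x$ using the special case $[x]A(x)^k = k a_0^{k-1} a_1$ of Lemma \ref{COEFFLEM}, and obtains $\sum_{k>0} b_k \, k a_0^{k-1} a_1 = 1$; the sum $\sum_{k>0} b_k k a_0^{k-1}$ is then recognized, term by term, as $B'(a_0)$. So the chain-rule step you single out as the main obstacle simply never arises in the paper --- the only ``derivative'' appearing there is the series $\sum_{k>0} b_k k x^{k-1}$ evaluated at $a_0$, which is just the definition of $B'$. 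Your instinct that the composition is delicate when $a_0 \ne 0$ is nonetheless well placed: the paper's argument carries exactly the same foundational caveat, since $\sum_{k \ge 0} b_k A(x)^k$ and the numerical sum $\sum_{k>0} b_k k a_0^{k-1}$ are infinite unless $B$ is a polynomial (which is the case of interest later in the paper) or some convergence is assumed. In fact, your fallback plan --- establish the needed identity coefficientwise from Lemma \ref{COEFFLEM} and then specialize --- when restricted to the single coefficient of $x$, \emph{is} the paper's proof. What your analytic chain-rule/inverse-function-theorem route buys is brevity and a standard off-the-shelf justification under analyticity hypotheses; what the paper's coefficient extraction buys is that it stays within the formal, elementary framework already set up by Lemma \ref{COEFFLEM}, and it is the same technique reused for the coefficients of $x^n$, $n > 1$, in the main recurrence theorem that follows.
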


\begin{proof}
The equation $B(A(x))=x$ implies
$[x]B(A(x)) = 1$, and this implies
\begin{equation}
\label{BPRIMEEQ1}
\sum_{k \ge 0} b_k \convpow{a}{1}{k} = 1. 
\end{equation}
Now $\convpow{a}{1}{k} = k a_0^{k-1}a_1$, so
equation \eqref{BPRIMEEQ1} is equivalent to
\begin{equation}
\label{BPRIMEEQ2}
\sum_{k > 0}b_k k a_0^{k-1} a_1 = 1. 
\end{equation}
Now $\sum_{k > 0}b_k k a_0^{k-1} = B'(a_0)$, so equation \eqref{BPRIMEEQ2} is equivalent to
\begin{equation*}
B'(a_0) a_1 = 1. 
\end{equation*}
\end{proof}

\section{From reversion to recurrence relations}
In this section, we give two theorems that describe a convolutional recurrence relation for any power series with a reversion that is a polynomial. One applies to any such power series, and the other to power series with constant term 0. 

\begin{lem}
Let $(a_n)$ be a sequence with generating function $A(x)$.  Let $\convpow{a}{n}{k}$ be as in Lemma \ref{COEFFLEM}. Then $\convpow{a}{n}{k}$ satisfies
$\convpow{a}{n}{1}=a_n$ and
\begin{equation}
\label{CONVEQN}
\convpow{a}{n}{k}=\sum_{j=0}^n a_{n-j}\convpow{a}{j}{k-1}.
\end{equation}
\end{lem}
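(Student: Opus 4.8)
The plan is to exploit the factorization $A(x)^k = A(x)\,A(x)^{k-1}$ together with the fact that the coefficient of $x^n$ in a product of two power series is the Cauchy product (convolution) of their coefficient sequences. This is precisely the observation, already flagged in the introduction, that makes convolutional recurrences emerge naturally from products of generating functions, so the proof should be short and direct.

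First I would dispose of the initial condition. By the definition of $\convpow{a}{n}{k}$ used in Lemma \ref{COEFFLEM}, setting $k=1$ gives $\convpow{a}{n}{1} = [x^n]A(x)^1 = [x^n]A(x) = a_n$, which is the claimed base case and needs no further argument.

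For the recurrence I would write $A(x)^k = A(x)\cdot A(x)^{k-1}$ for $k \ge 1$ and extract the coefficient of $x^n$. Applying the Cauchy product to the two factors yields
\begin{equation*}
\convpow{a}{n}{k} = [x^n]\bigl(A(x)\,A(x)^{k-1}\bigr) = \sum_{j=0}^{n} \bigl([x^{n-j}]A(x)\bigr)\bigl([x^{j}]A(x)^{k-1}\bigr).
\end{equation*}
Identifying $[x^{n-j}]A(x) = a_{n-j}$ and, again from the definition in Lemma \ref{COEFFLEM}, $[x^{j}]A(x)^{k-1} = \convpow{a}{j}{k-1}$, then reproduces equation \eqref{CONVEQN} exactly.

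Since the argument reduces to a single application of the convolution rule, there is no real obstacle beyond bookkeeping; the one point deserving a word of care is the range of summation. Because $A(x)$ has no negative powers of $x$, the convolution truncates automatically, so $j$ need only run from $0$ to $n$ (terms with $j < 0$ or $j > n$ contribute nothing), which is precisely the stated range. I would also note explicitly that the entire computation is valid at the level of formal power series, so no questions of convergence arise.
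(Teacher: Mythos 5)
Your proof is correct and complete: the base case follows directly from the definition $\convpow{a}{n}{k}=[x^n]A(x)^k$, and the recurrence is exactly the Cauchy product applied to the factorization $A(x)^k = A(x)\,A(x)^{k-1}$. The paper itself states this lemma without proof, treating it as immediate; your argument is precisely the standard justification it leaves implicit, so there is nothing to reconcile between the two.
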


We next define a term that is closely related to $\convpow{a}{n}{k}$.
\begin{defin}
Let $(a_n)$ be a sequence. Define $\truncpow{a}{n}{k}$ by
$\truncpow{a}{n}{1}=0$ and, for $k>1$, 
\begin{equation}
\label{TRUNCEQN}
\truncpow{a}{n}{k}=\sum_{j=1}^{n-1} a_{n-j}\convpow{a}{j}{k-1}+a_0\truncpow{a}{n}{k-1}.
\end{equation}
\end{defin}

\begin{lem}
\label{CONVTRUNCLEM}
The terms $\convpow{a}{n}{k}$ and $\truncpow{a}{n}{k}$ satisfy
\begin{equation}
\label{TRUNCCONVEQN}
\convpow{a}{n}{k}=\truncpow{a}{n}{k}+k a_0^{k-1}a_n
\end{equation}
\end{lem}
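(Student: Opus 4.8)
The plan is to prove equation \eqref{TRUNCCONVEQN} by induction on $k$, treating $n$ as arbitrary throughout. For the base case $k=1$, I would simply observe that $\convpow{a}{n}{1}=a_n$ by definition, while the right-hand side is $\truncpow{a}{n}{1}+1\cdot a_0^{0}a_n=0+a_n$, so the two sides agree.

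For the inductive step I would assume that \eqref{TRUNCCONVEQN} holds with $k-1$ in place of $k$ (for every $n$) and derive the case $k$. The starting point is the convolution recurrence \eqref{CONVEQN}, namely $\convpow{a}{n}{k}=\sum_{j=0}^{n}a_{n-j}\convpow{a}{j}{k-1}$. The key maneuver is to isolate the two extreme terms of this sum: the $j=n$ term, which equals $a_0\convpow{a}{n}{k-1}$, and the $j=0$ term, which equals $a_n\convpow{a}{0}{k-1}$. Since $\convpow{a}{0}{k-1}=[x^0]A(x)^{k-1}=a_0^{k-1}$, the $j=0$ term is $a_0^{k-1}a_n$. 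The remaining interior terms $\sum_{j=1}^{n-1}a_{n-j}\convpow{a}{j}{k-1}$ are precisely the first summand in the definition \eqref{TRUNCEQN} of $\truncpow{a}{n}{k}$, so I can replace them by $\truncpow{a}{n}{k}-a_0\truncpow{a}{n}{k-1}$.

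After these substitutions, $\convpow{a}{n}{k}$ reads $a_0^{k-1}a_n+\truncpow{a}{n}{k}-a_0\truncpow{a}{n}{k-1}+a_0\convpow{a}{n}{k-1}$. At this point I would apply the induction hypothesis to the final term, writing $\convpow{a}{n}{k-1}=\truncpow{a}{n}{k-1}+(k-1)a_0^{k-2}a_n$. Then $a_0\truncpow{a}{n}{k-1}$ cancels against its negative, and the two copies of $a_0^{k-1}a_n$ (one from the $j=0$ term, one of size $(k-1)$ from the hypothesis) combine into $k\,a_0^{k-1}a_n$, leaving exactly $\truncpow{a}{n}{k}+k\,a_0^{k-1}a_n$, as required.

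I do not expect any serious obstacle; the argument is essentially a bookkeeping induction with a telescoping cancellation. The only point demanding care is the boundary handling: the definition of $\truncpow{a}{n}{k}$ deliberately excludes the $j=0$ and $j=n$ endpoints of the convolution sum, so the proof hinges on correctly matching those endpoint contributions, $a_0^{k-1}a_n$ and $a_0\convpow{a}{n}{k-1}$, against the interior sum. Getting the index ranges right and evaluating $\convpow{a}{0}{k-1}=a_0^{k-1}$ correctly is what makes the cancellation go through cleanly.
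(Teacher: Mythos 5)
Your proof is correct and follows essentially the same route as the paper's: induction on $k$, splitting the convolution sum \eqref{CONVEQN} into its $j=0$ and $j=n$ endpoint terms plus the interior sum, applying the induction hypothesis to $\convpow{a}{n}{k-1}$, and invoking the definition \eqref{TRUNCEQN} to reassemble $\truncpow{a}{n}{k}$. The only difference is cosmetic—you solve \eqref{TRUNCEQN} for the interior sum before applying the induction hypothesis, whereas the paper substitutes in the opposite order—so the two arguments are the same up to a reordering of algebraic steps.
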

\begin{proof}
We use induction on $k$.
For k = 1 we have $\convpow{a}{n}{1}=a_n$, $\truncpow{a}{n}{1}=0$, and $k a_0^{k-1}a_n=a_n$.
For $k>1$, by equation \eqref{CONVEQN}, we get
\begin{equation}
\label{TCEQN1}
\convpow{a}{n}{k}=a_n\convpow{a}{0}{k-1} + a_0\convpow{a}{n}{k-1} + \sum_{j=1}^{n-1}a_{n-j}\convpow{a}{j}{k-1}.
\end{equation}
By induction, $\convpow{a}{n}{k-1} = \truncpow{a}{n}{k-1} + (k-1)a_0^{k-2}a_n$.
Thus equation \eqref{TCEQN1} is equivalent to
\begin{equation}
\label{TCEQN2}
\convpow{a}{n}{k}=
a_0\bigl(\truncpow{a}{n}{k-1}+(k-1)a_0^{k-2}a_n\bigr) + a_n\convpow{a}{0}{k-1} + \sum_{j=1}^{n-1}a_{n-j}\convpow{a}{j}{k-1}.
\end{equation}
Now $\convpow{a}{0}{k-1} = a_0^{k-1}$, 
so we collect the multiples of $a_0^{k-1}a_n$ in \eqref{TCEQN2} to get
\begin{equation}
\label{TCEQN3}
\convpow{a}{n}{k}
= a_0\truncpow{a}{n}{k-1}+ka_0^{k-1}a_n + \sum_{j=1}^{n-1}a_{n-j}\convpow{a}{j}{k-1}.
\end{equation}
Now equation \eqref{TRUNCEQN} lets us substitute $\truncpow{a}{n}{k}$ for the first and third terms in \eqref{TCEQN3}, giving the desired result.
\end{proof}

\begin{corr}
\label{CONVEQTRUNC}
In the situation of Lemma \ref{CONVTRUNCLEM}, if $a_0=0$, then $\convpow{a}{n}{k}=\truncpow{a}{n}{k}$ for $k>1$.
\end{corr}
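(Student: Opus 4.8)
The plan is to derive this immediately from Lemma \ref{CONVTRUNCLEM} by specializing the case $a_0 = 0$. The lemma gives the identity $\convpow{a}{n}{k} = \truncpow{a}{n}{k} + k a_0^{k-1} a_n$ for all $k$, so the corollary amounts to showing that the correction term $k a_0^{k-1} a_n$ vanishes whenever $a_0 = 0$ and $k > 1$.

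First I would substitute $a_0 = 0$ into the correction term. For $k > 1$ we have $k - 1 \ge 1$, so $a_0^{k-1} = 0^{k-1} = 0$, and hence $k a_0^{k-1} a_n = 0$. Substituting this back into equation \eqref{TRUNCCONVEQN} yields $\convpow{a}{n}{k} = \truncpow{a}{n}{k}$, which is exactly the claim.

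The only point requiring a moment's care is the restriction $k > 1$: the statement deliberately excludes $k = 1$, and this is necessary because for $k = 1$ the factor $a_0^{k-1} = a_0^0 = 1$, so the correction term is $1 \cdot 1 \cdot a_n = a_n$ rather than $0$. Indeed, by definition $\truncpow{a}{n}{1} = 0$ while $\convpow{a}{n}{1} = a_n$, so the two quantities differ at $k = 1$ unless $a_n = 0$. I expect no genuine obstacle here; the entire argument is a direct evaluation once $a_0 = 0$ is imposed, and the main thing to flag is simply the convention $0^0 = 1$ that forces the exclusion of the $k = 1$ case.
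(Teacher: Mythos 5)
Your proof is correct and matches the paper's (implicit) argument: the corollary is stated there without proof precisely because it follows by setting $a_0=0$ in equation \eqref{TRUNCCONVEQN} of Lemma \ref{CONVTRUNCLEM}, where the correction term $k a_0^{k-1}a_n$ vanishes for $k>1$. Your additional remark about why $k=1$ must be excluded ($\convpow{a}{n}{1}=a_n$ versus $\truncpow{a}{n}{1}=0$) is a correct and worthwhile clarification.
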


Next we consider the equation $0 = [x^n]B(A(x))$ for $n>1$.
\begin{thm}
The coefficient $a_n$, for $n>1$, satisfies
\begin{equation}
\label{REVREC}
a_n = -a_1\sum_{k>0} b_k\truncpow{a}{n}{k}.
\end{equation}
\end{thm}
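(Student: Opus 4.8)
The plan is to expand the defining equation $B(A(x)) = x$ coefficientwise and then let Lemma~\ref{CONVTRUNCLEM} do the work of isolating $a_n$. First I would write $B(A(x)) = \sum_{k \ge 0} b_k A(x)^k$ and extract the coefficient of $x^n$, obtaining $[x^n]B(A(x)) = \sum_{k \ge 0} b_k \convpow{a}{n}{k}$. Since $n > 1$, the left-hand side is $0$. The $k = 0$ term contributes $b_0 \convpow{a}{n}{0} = b_0 [x^n]A(x)^0 = b_0 [x^n] 1 = 0$ because $n > 0$, so it drops out and we are left with
\[
\sum_{k > 0} b_k \convpow{a}{n}{k} = 0.
\]

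Next I would apply Lemma~\ref{CONVTRUNCLEM}, equation~\eqref{TRUNCCONVEQN}, which expresses $\convpow{a}{n}{k} = \truncpow{a}{n}{k} + k a_0^{k-1} a_n$. Substituting this into the sum splits it into the truncated-power piece and a piece that is a multiple of $a_n$:
\[
\sum_{k > 0} b_k \truncpow{a}{n}{k} + a_n \sum_{k > 0} k b_k a_0^{k-1} = 0.
\]
The second sum is exactly $B'(a_0) = \sum_{k > 0} k b_k a_0^{k-1}$, which by Lemma~\ref{B1LEM} equals $1/a_1$. Substituting $B'(a_0) = 1/a_1$, moving the $a_n$ term to the other side, and multiplying through by $a_1$ yields the claimed recurrence \eqref{REVREC}.

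There is no serious obstacle remaining: the genuine content has already been packaged into the two lemmas being invoked. The point of introducing the truncated powers $\truncpow{a}{n}{k}$ in equation~\eqref{TRUNCEQN} was precisely to peel off the summand of $\convpow{a}{n}{k}$ that depends linearly on $a_n$, namely $k a_0^{k-1} a_n$, so that after substitution the coefficient of $a_n$ collapses into the single recognizable quantity $B'(a_0)$. The only care needed is the bookkeeping at the boundary, confirming that the $k=0$ term vanishes (so the sum can start at $k>0$, matching the range in which Lemma~\ref{CONVTRUNCLEM} applies) and that the hypothesis $n>1$ is what guarantees $[x^n]B(A(x))=0$ rather than $1$. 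Once those are checked, the derivation is a short chain of substitutions and one division by $a_1$.
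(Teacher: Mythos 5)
Your proposal is correct and follows essentially the same route as the paper's proof: expand $[x^n]B(A(x))=0$, split each $\convpow{a}{n}{k}$ via Lemma~\ref{CONVTRUNCLEM} into $\truncpow{a}{n}{k}+ka_0^{k-1}a_n$, identify the coefficient of $a_n$ as $B'(a_0)=1/a_1$ by Lemma~\ref{B1LEM}, and solve for $a_n$. Your write-up is in fact slightly more careful than the paper's, since you explicitly check that the $k=0$ term vanishes and that $n>1$ forces the left-hand side to be $0$.
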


\begin{proof}
By Lemma \ref{CONVTRUNCLEM}, we have 
\begin{align}
0 =&[x^n]B(A(x)) \\
=& \sum_{k>0} b_k\truncpow{a}{n}{k} \\
=& \sum_{k>0} b_k k a_0^{k-1}a_n 
+ \sum_{k>0} b_k\truncpow{a}{n}{k}. \label{RECTHMEQN1}
\end{align}

By Lemma \ref{B1LEM}, we have $\sum_{k>0} b_k k a_0^{k-1} = 1/a_1$.
so the first term is equal to $a_n/a_1$.
Substituting this into \eqref{RECTHMEQN1} gives
\begin{equation*}
0=a_n/a_1 
+ \sum_{k>0} b_k\truncpow{a}{n}{k}.
\end{equation*}
Now solving for $a_n$ gives the desired result.
\end{proof}

\begin{corr}
\label{ZEROCOROLLARY}
In the situation of Lemma \ref{CONVTRUNCLEM}, if $a_0=0$, then for $n>1$ we have
\begin{equation}
\label{REVREC2}
a_n = -a_1\sum_{k>1} b_k\convpow{a}{n}{k}.
\end{equation}
\end{corr}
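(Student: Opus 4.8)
The plan is to specialize the preceding theorem to the case $a_0 = 0$. The theorem already gives us the recurrence
\begin{equation*}
a_n = -a_1\sum_{k>0} b_k\truncpow{a}{n}{k}
\end{equation*}
for $n>1$, valid for any power series whose reversion is a polynomial. So the corollary is not really a new result but a simplification of this formula under the hypothesis $a_0=0$, and I would prove it by making two substitutions into the right-hand side.

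First I would use Corollary \ref{CONVEQTRUNC}, which states that when $a_0=0$ we have $\convpow{a}{n}{k}=\truncpow{a}{n}{k}$ for all $k>1$. This lets me replace the truncated powers by ordinary convolutional powers in every term of the sum with index $k>1$. The only term not covered by that substitution is the $k=1$ term, so I would handle it separately. Second, I would examine the $k=1$ contribution directly: by definition $\truncpow{a}{n}{1}=0$, so the $k=1$ term of the original sum vanishes identically. This is the step that changes the lower limit of summation from $k>0$ to $k>1$, and it is the crux of the argument — once the $k=1$ term is shown to drop out, the remaining sum runs over $k>1$, where Corollary \ref{CONVEQTRUNC} applies uniformly.

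Combining these two observations, the sum $\sum_{k>0} b_k\truncpow{a}{n}{k}$ collapses to $\sum_{k>1} b_k\truncpow{a}{n}{k}$ (discarding the vanishing $k=1$ term), and then to $\sum_{k>1} b_k\convpow{a}{n}{k}$ (applying the corollary termwise). Substituting this into the theorem's recurrence yields exactly equation \eqref{REVREC2}. I expect no real obstacle here: the work is entirely bookkeeping about which summation index the two cited results govern. The one point to state carefully is that Corollary \ref{CONVEQTRUNC} is only asserted for $k>1$, so it is essential that the $k=1$ term has already been removed before the substitution is made; doing the two steps in the opposite order would risk an invalid application of the corollary at $k=1$.
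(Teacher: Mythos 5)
Your proof is correct and takes essentially the same route as the paper, whose one-line proof cites exactly the same two facts: $\truncpow{a}{n}{1}=0$ (killing the $k=1$ term) and Corollary \ref{CONVEQTRUNC} (replacing $\truncpow{a}{n}{k}$ by $\convpow{a}{n}{k}$ for $k>1$ when $a_0=0$). Your version merely spells out the bookkeeping, including the correct observation that the $k=1$ term must be discarded before the termwise substitution is applied.
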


\begin{proof}
This follows from the fact that $\truncpow{a}{n}{1}=0$ and Lemma \ref{CONVEQTRUNC}.
\end{proof}

\begin{corr}
\label{SHIFTCOROLLARY}
Let $A(x)=\sum_{k\ge 0}a_nx^n$ be a power series with $a_0\ne 0$, let $C(x)=xA(x)$, and let $B(x)$ be the reversion of $C(x)$.Then 
\begin{equation}
\label{CONVREC_shift}
a_n = -\sum_{k>1}  b_k\convpow{a}{n-k+1}{k}.
\end{equation}
\end{corr}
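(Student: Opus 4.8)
The plan is to deduce this from Corollary \ref{ZEROCOROLLARY} by passing to the auxiliary series $C(x)=xA(x)$, whose constant term vanishes. Writing $C(x)=\sum_{m\ge 0}c_mx^m$, we have $c_0=0$, $c_1=a_0$, and $c_m=a_{m-1}$ for all $m\ge 1$, so that $C$ together with its reversion $B$ falls squarely into the hypotheses of Corollary \ref{ZEROCOROLLARY}. The point of this substitution is that $A$ itself has $a_0\ne 0$ and so does not satisfy those hypotheses, whereas $C$ does.

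The computational heart of the argument is the identity relating the convolution powers of $C$ to those of $A$. Since $C(x)^k=x^kA(x)^k$, extracting a coefficient gives $\convpow{c}{m}{k}=[x^m]x^kA(x)^k=[x^{m-k}]A(x)^k=\convpow{a}{m-k}{k}$. This single index shift is exactly what converts a recurrence phrased in the coefficients $c_m$ into one phrased in the coefficients $a_n$, and it is the reason the convolution power in the conclusion is evaluated at the shifted index $n-k+1$ rather than at $n$.

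With these in hand I would apply Corollary \ref{ZEROCOROLLARY} to $C$: for $m>1$ it gives $c_m=-c_1\sum_{k>1}b_k\convpow{c}{m}{k}$. Substituting $c_1=a_0$, the power identity $\convpow{c}{m}{k}=\convpow{a}{m-k}{k}$, and $c_m=a_{m-1}$ turns this into $a_{m-1}=-a_0\sum_{k>1}b_k\convpow{a}{m-k}{k}$, and finally reindexing by $m=n+1$ produces the recurrence, the convolution-power index becoming $(n+1)-k=n-k+1$. The step that requires the most care is coordinating the two independent shifts—the $-k$ built into the coefficient identity for $\convpow{c}{m}{k}$ and the $-1$ relating $c_m$ to $a_{m-1}$—so that the subscript in the final sum lands on $n-k+1$; I would confirm both this combined shift and the exact constant appearing in front against the small case $A(x)=a_0+a_1x$, where $B$ can be computed by hand.
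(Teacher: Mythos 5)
Your proposal is correct and follows essentially the same route as the paper's proof: apply Corollary \ref{ZEROCOROLLARY} to $C(x)=xA(x)$, translate back to the coefficients of $A$, and shift the index; your coefficient-extraction identity $\convpow{c}{m}{k}=[x^m]x^kA(x)^k=\convpow{a}{m-k}{k}$ is in fact a cleaner justification of the index shift than the paper's verbal argument about subscripts of products differing by $k$ units. One discrepancy is worth flagging, and your instinct to check ``the exact constant appearing in front'' is exactly right: your derivation ends with $a_n=-a_0\sum_{k>1}b_k\convpow{a}{n-k+1}{k}$, which carries the factor $a_0$, and this is also where the paper's own proof lands (apart from a typo there writing $k>0$ for $k>1$ in the summation range); the corollary as printed in equation \eqref{CONVREC_shift} omits the factor $a_0$, so it is valid only when $a_0=1$ (as in the Catalan and Patalan applications), not in the generality stated. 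Your suggested test case confirms this: for $A(x)=2+x$ one gets $B(y)=-1+\sqrt{1+y}$, so $b_2=-1/8$, and for $n=1$ the printed formula gives $-b_2a_0^2=1/2$ whereas $a_1=1$, while the version with the factor $a_0$ gives $2\cdot(1/2)=1$ as required. In short, your proof is sound and matches the paper's; it is the statement of the corollary that should be corrected to $a_n=-a_0\sum_{k>1}b_k\convpow{a}{n-k+1}{k}$.
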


\begin{proof}
Since $C(x) = xA(x)$, the coefficients of $C$ and $A$ are related by $c_n=a_{n-1}$ for $n>0$ and $c_0=0$.
By Corollary \ref{ZEROCOROLLARY}, we have
\begin{equation}
\label{RECURC}
c_n = -c_1\sum_{k>1} b_k\convpow{c}{n}{k}.
\end{equation}
Substituting in terms of the coefficients of $A$ we get
\begin{equation}
\label{RECURA}
a_{n-1} = -a_0\sum_{k>0} b_k\convpow{a}{n-k}{k}.
\end{equation}
Note that the subscripts of the corresponding terms $\truncpow{c}{n}{k}$ and $\truncpow{a}{n-k}{k}$ 
in equations \eqref{RECURC} and \eqref{RECURA} differ by $k$ units. Every term in the corresponding sums has $k$ factors, and every factor's subscript differs by $1$, so the subscripts of their products differ by $k$ units.
Now shifting the index on $(a_n)$ by one unit gives
\begin{equation}
a_{n} = -a_0\sum_{k>0} b_k\convpow{a}{n-k+1}{k}.
\end{equation}
\end{proof}

In general, equation \eqref{REVREC} does not give us a way to compute the terms of the sequence $(a_n).$ It reduces to a finite sum if $B(x)$ is a polynomial. In that case, $a_0$ must be a root of $B(x)$, and $a_1$ is determined by Lemma \ref{B1LEM}. Then by equation (5), for a given $n$, $\truncpow{a}{n}{k}$ for $k > 1$ is computed before $a_n$, equation \eqref{TRUNCEQN} gives a recursive formula.  In the case $a_0=0$, the sum terminates for each $n$, but the number of terms grows with $n$. We can always get recurrence relations with either a finite sum or terminating sums by using the reversion of $xA(x)$ when $a_0 \ne 0$. We consider some examples next.

\section{Examples 1: Catalan and Patalan recurrences with non-zero constant term}
We consider convolutional recurrences for the Catalan and Patalan numbers.
Let $(a_n)_{n\ge 0}$ be the Patalan numbers of order $p$, and let $A(x)$ be their generating function.
Define the sequences $(c_n)$ and $(d_n)$ by $c_0=0$, $c_n = d_n = a_{n-1}$ for $n>0$, and $d_0=-1/p$. Also let $C(x)$ and $D(x)$ be their respective generating functions.
Thus $C(x) = xA(x)$, and $D(x)=C(x)-1/p$.

Now $C(x)$ is the reversion of
$f(x) \frac{1 - (1-px)^p}{p^2}$. This expression for $f$ expands to 
$f(x) = -\sum_{k=1}^p \binom{p}{k} p^{k-2}(-x)^k$ \cite{SUPERPAT_PUBLISHED}. 
By equation \eqref{REVREC}, we get the convolutional recurrence
\begin{equation}
\label{CONVREC_0}
c_n = \sum_{k\ge 2} (-1)^k\binom{p}{k} p^{k-2}\convpow{c}{n}{k}.
\end{equation}
By Corollary \ref{SHIFTCOROLLARY}, this gives the convolutional recurrence
\begin{equation}
\label{CONVREC_a}
a_n = \sum_{k>1} (-1)^k\binom{p}{k} p^{k-2}\convpow{a}{n-k+1}{k}.
\end{equation}
Equation \eqref{CONVREC_a} is equivalent to equation (14) of \cite{SUPERPAT_PUBLISHED}.

Next we consider $d(n)$ and $D(x)$. Since $D(x) = C(x)-1/p$, and $C(x)$ is the reversion of $\frac{1 - (1-px)^p}{p^2}$, we see that $D(x)$ is reversion of \begin{equation*}
g(x)=\frac{1 - \bigl(1-p(x+1/p)\bigr)^p}{p^2} = \frac{1 - (-px)^p}{p^2}.
\end{equation*}
This gives the convolutional recurrence
\begin{equation}
\label{CONVREC_p}
d_n = (-1)^{p} p^{p-2}\truncpow{d}{n}{p}.
\end{equation}
While the convolutional recurrences of equations \eqref{CONVREC_0} and \eqref{CONVREC_a} are essentially the same except for the indexing, the recurrence relation of equation \eqref{CONVREC_p} is distinct when $p>2.$
For example, for $p=4$, we have 
$\truncpow{d}{2}{4} = \frac{3}{8}$, 
$\truncpow{d}{3}{4} = \frac{7}{2}$, and
$\truncpow{d}{4}{4} = \frac{77}{2}$. 
Thus the recurrence of equation \eqref{CONVREC_p} is not an integral recurrence.

The form of equation \eqref{CONVREC_p} perhaps is a more natural generalization of the well known recurrence relation for the Catalan numbers, in the sense that it defines the Patalan numbers of order $p$ in terms of a $p^{\text{th}}$ convolutional power. Of course, the compact notation $\truncpow{d}{n}{k}$ represents a more complex sum than in the convolutional recurrence relation for the Catalan numbers.

\section{Examples 2: reversions of small polynomials with constant term zero}
Most of the sequences in the OEIS that are related to reversions of small polynomials are listed as such in the OEIS \cite{OEIS}. The Catalan numbers are related to the reversion of $x-x^2$, corresponding to the well known convolutional formula for the Catalan numbers \cite[A000108]{OEIS}.

Other sequences that are related to reversions of small polynomials are listed in Table \ref{table:1}. The terms of these sequences may all be calculated using equation \eqref{REVREC2}.

\begin{table}
\caption{Reversions of low degree polynomials in OEIS}
\label{table:1}
\centering
\begin{tabular}{||c | c ||} 
 \hline
 Polynomial & Sequence of Reversion \\ [0.5ex] 
 \hline\hline
 $x-x^2-x^3$ & A001002 \\ 
 $x-2x^2-x^3$ & A192945 \\ 
 $x-x^2-2x^3$ & A250886 \\ 
 $x-3x^2-x^3$ & A120590 \\ 
 $x-2x^2-2x^3$ & A276310 \\ 
 $x-x^2-3x^3$ & A276314 \\ 
 $x-3x^2-2x^3$ & A276315 \\ 
 $x-2x^2-3x^3$ & A250887 \\ 
 $x-2x^2+x^3$ & A006013 \\ 
 $x-3x^2+x^3$ & A005159 \\ 
 $x-2x^2+2x^3$ & A085614 \\ 
 $x-4x^2+x^3$ & A276316 \\ [1ex] 
 \hline
\end{tabular}
\end{table}

\section{Recurrence relations and Riordan arrays}
The examples in the last section can be put into the context of Riordan arrays. A Riordan array is an infinite lower triangular matrix based on two power series. 

\begin{defin}
\label{RIORDANDEF}
Let $g(x)=\sum_{k\ge 0} g_kx^k$ and $f(x) = \sum_{k\ge 0} f_kx^k$ be power series, with $f_0=0$. Define the {\em Riordan array} $R=R(g,f)$ by $R_{n,k} = [x^n](g(x)f(x)^k)$, for integers $n,k\ge 0$.
\end{defin}

The elements of the Riordan array may also be defined using two sequences, the $A$-sequence and the $Z$ sequence. The $A$-sequence depends only on $f$. The $A$ sequence is the coefficient sequence of the recurrence relation
\begin{equation}
d_{n+1,k+1}=\sum_{j=0}^\infty a_jd_n,k+j
\end{equation} for the entries of a Riordan array. See the paper of He and Sprugnoli for details \cite[equation (2.4)]{HESPRUGNOLI}. He and Sprugnoli show that the power series of the $A$-sequence satisfies $f(x) = xA(f(x))$ \cite[equation (2.6)]{HESPRUGNOLI}. If we let $\bar{f}(x)$ be the reversion of $f(x)$, this is equivalent to $A(x) = \frac{x}{\bar{f}(x)}$. This formula may be interepreted as a compostion of the inversion and reversion of the power series $f$. The following definitions make this precise.

\begin{defin} 
Let $(f_n)$, $n\ge 0$ be a sequence with $f_0 \ne 0$, and let $f(x) = \sum_n f_nx^n$. Define $INV((f_n))$ to be the sequence given by the inversion of $f(x)$.
\end{defin}

\begin{defin} 
Let $(f_n)$, $n\ge 0$ be a sequence with $f_0 \ne 0$, and let $F(x) = \sum_{n\ge 0} f_nx^{n+1}$. Let the power series $H(x) = \sum_{n\ge 0} h_nx^{n+1}$ be the reversion of $F(x)$. Define $REV((f_n))$ to be $(h_n)$.
\end{defin}

Now we can express the $A$-sequence in terms of $INV$ and $REV$.

\begin{thm}
\label{THREERS}
Let $(f_n)$, $n\ge 0$ be a sequence with $f_0 \ne 0$, and let $(a_n)$ be the $A$-sequence of the Riordan array $R(1, xf(x))$. Then $(a_n)=INV(REV((f_n)))$.
\end{thm}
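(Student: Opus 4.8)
The plan is to chase the definitions of $REV$ and $INV$ and reduce the statement to the He--Sprugnoli identity $A(x) = x/\bar{F}(x)$ quoted above, in which $F$ denotes the second series of the Riordan array and $\bar{F}$ its reversion. Because the $A$-sequence depends only on the second series, the choice $g(x)=1$ is irrelevant, and I may work directly with the second series $F(x) = xf(x)$. The whole argument is then bookkeeping with the power series $F$, $\bar{F}$, and their ordinary generating functions, together with the one index shift built into each of the two operations $REV$ and $INV$.

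First I would observe that $F(x) = xf(x) = \sum_{n\ge 0} f_n x^{n+1}$ is precisely the series appearing in the definition of $REV$, so its reversion $\bar{F}(x)$ is the series $H(x) = \sum_{n\ge 0} h_n x^{n+1}$ with $REV((f_n)) = (h_n)$. Writing $h(x) = \sum_{n\ge 0} h_n x^n$ for the ordinary generating function of that sequence, I have $H(x) = x\,h(x)$, hence $\bar{F}(x) = x\,h(x)$. I would then invoke the He--Sprugnoli formula for the $A$-sequence of $R(1, xf(x))$, namely $A(x) = x/\bar{F}(x)$, and substitute to get $A(x) = x/(x\,h(x)) = 1/h(x)$. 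By the definition of $INV$, the coefficient sequence of $1/h(x)$ is exactly $INV((h_n)) = INV(REV((f_n)))$; since $(a_n)$ is by definition the coefficient sequence of $A(x)$, this gives $(a_n) = INV(REV((f_n)))$, as claimed.

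The step I expect to require the most care is the reconciliation of the two index shifts hidden in the definitions. The $REV$ operation strips a factor of $x$, recording the coefficient of $x^{n+1}$ in $\bar{F}$ as $h_n$, whereas passing from $h(x)$ to $A(x) = 1/h(x)$ effectively restores a factor of $x$ relative to $\bar{F}$; one must check that these cancel so that $1/h(x)$ equals $x/\bar{F}(x)$ exactly, with no off-by-one error. I would also confirm that the hypothesis $f_0 \neq 0$ is exactly what makes everything well defined: since $F(x) = f_0 x + \cdots$ with $f_0 \neq 0$, the reversion $\bar{F}$ exists with linear coefficient $1/f_0$ (by Lemma \ref{B1LEM}), so that $h(0) = 1/f_0 \neq 0$ and $1/h(x)$ is a genuine power series, and this same condition is precisely the one under which both $REV$ and $INV$ are defined.
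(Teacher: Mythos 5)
Your proof is correct and takes essentially the same route as the paper: the paper states this theorem without a separate proof precisely because the preceding discussion---the He--Sprugnoli identity $f(x) = xA(f(x))$, rewritten as $A(x) = \frac{x}{\bar{f}(x)}$, ``interpreted as a composition of the inversion and reversion''---is the argument you spell out. Your explicit bookkeeping of the index shifts in $REV$ and $INV$, and the check that $f_0 \neq 0$ makes $h_0 = 1/f_0 \neq 0$ so that the multiplicative inverse exists, supplies exactly the details the paper leaves implicit.
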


Note that the $INV$ transform of a sequence defined by a linear recurrence gives the coefficients of the linear recurrence, while the $REV$ transform of a sequence defined by a convolutional recurrence gives the coefficients of the convolutional recurrence. From this view, Theorem \ref{THREERS} relates recurrences, reversion, and Riordan arrays.

\section{Recursive matrices and the extended super Patalan matrix}
The next definition, which extends definition \ref{RIORDANDEF}, follows Luz\'{o}n et. al \cite{LMMS}. to define doubly infinite Riordan Arrays.

\begin{defin}
\label{RIORDANDIDEF}
Let $g(x)=\sum_{k \ge 0} g_kx^k$ and $f(x) = \sum_{k \ge 0} f_kx^k$ be power series, with $f_0=0$ and $f_1\ne 0$. For $k<0$, let $f(x)^k$ be the multiplicative inverse of $f(x)^{-k}$ in the ring of formal Laurent series. Define the {\em recursive matrix} $D=D(g,f)$ by $D_{n,k} = [x^n](g(x)f(x)^k)$, for all integers $n,k$.
\end{defin}

The matrix $L$ in the authors paper The Super Patalan Numbers  is in fact a recursive matrix \cite[Theorem 6]{SUPERPAT_PUBLISHED}. In addition, $L$ is an involution. Riordan group involutions have been studied by Shapiro, Cheon, and Kim \cite{SHAPIROINVOLUTIONS}, \cite{CHEONKIM}, \cite{RIORDANINVOLUTION}.

The Super Catalan numbers, form the lower left quadrant of $L$ \cite[sequence A068555]{OEIS}. The ordinary Riordan array $R(1/\sqrt{1-4x},-x/(1-4x))$ forms the (absolute value of the) lower right quadrant of $L$ \cite[sequence A046512]{OEIS}. 

\section{The dual Riordan array}
We consider a Riordan array that is called the dual Riordan array by Luz\'{o}n et. al. \cite{LMMS}. It the upper left quadrant of a recursive matrix, after rotating and transposing.

We want to define the anti-transpose of a matrix, including infinite matrices, to be the reflection across the anti-diagonal. First, we will use the notation $A^@$ for the anti-transpose. The pronounciation of $@$ as "at" suggests its use as an abbreviation for "anti-transpose".

For finite matrices, the anti-transpose is just like the transpose, except it is the reflection across the anti-diagonal instead of the main diagonal. For doubly infinite matrices, we define $A^@_{i,j} = A_{-j,-i}$, for all integers $i,j$. For infinite matrices we use the same definition as for doubly infinite matrices, with the understanding that the index sets are as implied by the definition.

The anti-transpose of a recursive matrix is the $[0]$-complementary recursive matrix defined by 
Luz\'{o}n et. al. \cite[Definition 3.1]{LMMS}.

Now we define the Riordan dual $R^*(g,f)$.  

\begin{defin} Let $R(g,f)$ be a Riordan array as in definition \ref{RIORDANDEF}. 
Let $D(g,f)$ be the corresponding recursive matrix.
Define the {\em dual Riordan array} $R^*(g,f)$ by
\begin{equation}
R^*(g,f)_{i,j}=D(g,f)_{-j,-i}.
\end{equation}
\end{defin}

In the recursive matrix $D(g,f)$, the corresponding Riordan array $R(g,f)$ is the lower right quadrant, while the corresponding dual Riordan array $R^*(g,f)$ is the anti-transpose of the upper left quadrant.

In an earlier version of this paper, before we were aware of the work of Luz\'{o}n et. al., we used the term {\em doubly infinite Riordan array} for recursive matrix, and {\em Riordan dual} for dual Riordan array. 

The following theorem was proved by Luz\'{o}n et. al. \cite[Theorem 3.1]{LMMS}. The paper by Kruchinin and Kruchinin is also relevant to this theorem. \cite[Theorem 2]{KRUCHININ}
\begin{thm}
Let $g$ and $f$ be as in definition \ref{RIORDANDEF}.
Let $\bar{f}$ be the reversion of $f$. 
Let $\hat{f}$ be given by $\hat{f}(x)=\frac{x\bar{f'}(x)}{\bar{f}(x)}.$
Then the Riordan dual satisfies
\begin{equation}
R^*(g,f)=R(\hat{f}g(\bar{f}),\bar{f}).
\end{equation}
\end{thm}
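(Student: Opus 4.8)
The plan is to compute both sides as coefficient extractions and bridge them with the formal change-of-variables formula for residues. Writing $[x^n]P(x)=\operatorname{Res}_x\bigl(x^{-n-1}P(x)\bigr)$, where $\operatorname{Res}_x$ denotes the coefficient of $x^{-1}$ in a formal Laurent series, I would first unfold the left-hand side directly from the definitions:
\[
R^*(g,f)_{i,j}=D(g,f)_{-j,-i}=[x^{-j}]\bigl(g(x)f(x)^{-i}\bigr)=\operatorname{Res}_x\bigl(g(x)f(x)^{-i}x^{j-1}\bigr).
\]
At this point I would record that the hypotheses $f_0=0$, $f_1\neq 0$ make $f$ of order exactly one, so $f(x)^{-i}$ is a genuine formal Laurent series and every expression below is well defined for all integers $i,j$.

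The central step is the substitution $x=\bar f(y)$. Since $\bar f(0)=0$ and, by Lemma \ref{B1LEM}, $\bar f'(0)=1/f_1\neq 0$, the formal residue is invariant under this change of variable: for any Laurent series $\omega$,
\[
\operatorname{Res}_x\omega(x)=\operatorname{Res}_y\bigl(\omega(\bar f(y))\,\bar f'(y)\bigr).
\]
Applying this with $\omega(x)=g(x)f(x)^{-i}x^{j-1}$ and using the defining relation $f(\bar f(y))=y$ to collapse $f(\bar f(y))^{-i}$ to $y^{-i}$ yields
\[
R^*(g,f)_{i,j}=\operatorname{Res}_y\bigl(g(\bar f(y))\,y^{-i}\,\bar f(y)^{j-1}\,\bar f'(y)\bigr)=[y^{i}]\bigl(y\,\bar f'(y)\,g(\bar f(y))\,\bar f(y)^{j-1}\bigr).
\]

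Finally I would recognize the definition of $\hat f$: because $\hat f(y)=y\bar f'(y)/\bar f(y)$, we have $y\,\bar f'(y)=\hat f(y)\,\bar f(y)$, so the bracketed series equals $\hat f(y)\,g(\bar f(y))\,\bar f(y)^{j}$, and its coefficient of $y^i$ is precisely $R\bigl(\hat f\,g(\bar f),\,\bar f\bigr)_{i,j}$, which is the claim. The step I expect to require the most care is the residue change-of-variables formula: it is standard for power series composed with an order-one series, but here $\omega$ carries a pole, so I would either cite the formal residue theorem or verify invariance monomial-by-monomial on $x^m$, treating the exact-derivative case $m\neq -1$ (residue $0$ on both sides) separately from the logarithmic case $m=-1$ (where $\bar f'(y)/\bar f(y)$ has residue $1$), and checking that the order-one condition on $\bar f$ guarantees that $\omega(\bar f(y))$ is a well-defined Laurent series with only finitely many terms contributing to each power of $y$. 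A secondary point to confirm is that this bookkeeping remains valid for negative $i$ and $j$, which again follows from $\bar f$ having order exactly one.
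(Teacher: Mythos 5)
Your proof is correct, but there is nothing in the paper to compare it against: the paper does not prove this theorem at all, it quotes it from Luz\'{o}n et al.\ \cite[Theorem 3.1]{LMMS}. Your argument is therefore a self-contained alternative to an external citation, and the route --- writing the entry $R^*(g,f)_{i,j}=D(g,f)_{-j,-i}$ as a formal residue, changing variables by $x=\bar f(y)$, collapsing $f(\bar f(y))^{-i}$ to $y^{-i}$, and absorbing $y\bar f'(y)=\hat f(y)\bar f(y)$ --- is the standard Lagrange-inversion-style calculation, carried out with the right bookkeeping; you are also right that the entire weight of the proof sits on the residue substitution lemma for Laurent series with poles, and your monomial-by-monomial verification (exact derivatives for $m\ne -1$, the logarithmic term $\bar f'/\bar f$ with residue $1$ for $m=-1$) is the correct way to settle it. Two points deserve to be made explicit in a final write-up. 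First, the theorem as stated only assumes $f$ as in Definition \ref{RIORDANDEF} (so $f_0=0$), but your proof --- and indeed the very definition of $R^*(g,f)$, which goes through the recursive matrix of Definition \ref{RIORDANDIDEF} --- requires $f_1\ne 0$; you assume this, and it is forced by the setup, but the theorem's hypotheses hide it. Second, the paper defines reversion by $\bar f(f(x))=x$, whereas you use $f(\bar f(y))=y$; these agree because compositional inverses of order-one series are two-sided, but that half-line should be said. As a bonus, your computation actually establishes $D(g,f)_{-j,-i}=[y^i]\bigl(\hat f(y)\,g(\bar f(y))\,\bar f(y)^{j}\bigr)$ for all integers $i,j$, which is slightly stronger than the stated identity between Riordan arrays (indices $i,j\ge 0$), and it specializes correctly on the paper's own example, where $g=1/\sqrt{1-4x}$, $f=\bar f=-x/(1-4x)$, $\hat f=1/(1-4y)$, and $\hat f\,g(\bar f)=g$ recovers the claimed self-duality.
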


We have seen that a recursive matrix contains both the corresponding Riordan array and the dual Riordan array. For the the example of the recursive matrix $L$ in \cite{SUPERPAT_PUBLISHED}, the matrix $S$ in the lower left quadrant of $L$ is a Super Catalan or Super Patalan matrix, with the column ordering reversed.
We state without proof the theorems that describe these relationships.

\begin{thm}
Let $p$ and $q$ be integers with $p\ge2$ and $0<q<p$. Let $Q(p,q)$ be the matrix of $(p,q)$-super Patalan numbers. Then $Q(p,q)$ forms the lower left quadrant of the recursive matrix
$D\biggl(\frac{1}{(1-p^2x)^{q/p}}, \frac{-x}{1-p^2x}\biggr).$
\end{thm}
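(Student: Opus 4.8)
The plan is to prove the identity entrywise, by comparing the explicit formula for the $(p,q)$-super Patalan numbers established in \cite{SUPERPAT_PUBLISHED} against a direct coefficient extraction from the recursive matrix of Definition \ref{RIORDANDIDEF}. First I would fix the index conventions. Writing $g(x)=(1-p^2x)^{-q/p}$ and $f(x)=-x/(1-p^2x)$, the associated Riordan array $R(g,f)$ occupies the lower right quadrant $n,k\ge 0$ of $D(g,f)$, so the lower left quadrant consists of the entries $D_{n,k}$ with $n\ge 0$ and $k\le 0$. Reading this quadrant as an ordinary matrix with the column ordering reversed, as in the Super Catalan case, amounts to the translation $Q(p,q)_{n,m}=D_{n,-m}$ for $n,m\ge 0$. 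The theorem therefore reduces to the claim that $[x^n]\bigl(g(x)f(x)^{-m}\bigr)$ equals the $(n,m)$ entry of the super Patalan matrix.

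Next I would compute the negative powers of $f$ in the ring of formal Laurent series, as prescribed by Definition \ref{RIORDANDIDEF}. Since $f(x)=-x/(1-p^2x)$ is $x$ times a unit, its inverse is the Laurent series $f(x)^{-1}=-(1-p^2x)/x$, whence
\begin{equation*}
g(x)f(x)^{-m}=(-1)^m x^{-m}(1-p^2x)^{m-q/p}.
\end{equation*}
Extracting the coefficient of $x^n$ and expanding the generalized binomial series $(1-p^2x)^{\alpha}=\sum_{j\ge 0}\binom{\alpha}{j}(-p^2)^j x^j$ with $\alpha=m-q/p$ gives the closed form
\begin{equation*}
D_{n,-m}=(-1)^n\binom{m-q/p}{n+m}p^{2(n+m)}.
\end{equation*}
As a sanity check, taking $p=2$ and $q=1$ reduces $D_{n,-m}=(-1)^n\binom{m-1/2}{n+m}4^{n+m}$, via the standard half-integer binomial identities, to the Super Catalan numbers, the expected specialization.

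The remaining, and principal, task is to reconcile this closed form with the explicit formula for the $(p,q)$-super Patalan numbers recorded in \cite{SUPERPAT_PUBLISHED}. I would rewrite the generalized binomial coefficient $\binom{m-q/p}{n+m}$ in terms of Pochhammer symbols with step $1/p$, absorb the factor $p^{2(n+m)}$ and the sign produced by the Laurent inversion, and apply the reflection identity for falling factorials to clear the negative upper entry. The main obstacle lies precisely here: the fractional parameter $q/p$ and the sign $(-1)^n$ coming from the negative-index columns must be matched exactly against the super Patalan formula, and a single slip in the binomial-coefficient convention or in the column-reversal translation $k\mapsto -m$ would break the correspondence. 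Once that algebraic identity is verified, the entrywise equality $Q(p,q)_{n,m}=D_{n,-m}$ holds for all $n,m\ge 0$, which is the assertion of the theorem.
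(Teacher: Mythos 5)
You should first note that the paper itself offers no proof of this theorem: it is stated in a block introduced by ``We state without proof the theorems that describe these relationships,'' with only a pointer to the comments about the column generating functions of the doubly infinite matrix $E$ in \cite{SUPERPAT_PUBLISHED}. So there is no internal argument to compare against, and your attempt stands on its own. Your strategy is exactly the one that pointer suggests, and your Laurent-series computation is correct and is the heart of the matter: $f(x)^{-1}=-(1-p^2x)/x$, hence $g(x)f(x)^{-m}=(-1)^m x^{-m}(1-p^2x)^{m-q/p}$, and
\begin{equation*}
D_{n,-m}=[x^n]\bigl(g(x)f(x)^{-m}\bigr)=(-1)^n\binom{m-q/p}{n+m}p^{2(n+m)},
\end{equation*}
with the $p=2$, $q=1$ specialization to the super Catalan numbers checking out via the half-integer binomial identity.

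The genuine gap is that you stop exactly where the content of the theorem lies. You reduce the claim to ``reconcile this closed form with the explicit formula for the $(p,q)$-super Patalan numbers,'' label that reconciliation the main obstacle, and never carry it out; the identification $Q(p,q)_{n,m}=D_{n,-m}$ is therefore asserted, not proved. Moreover, the indexing hazard you flag is not cosmetic, because $Q(p,q)$ is not symmetric: by the reflection identity $\binom{m-q/p}{m+n}=(-1)^{m+n}\binom{n-(p-q)/p}{m+n}$, attaching the sign to one index and the fractional shift to the other gives $Q(p,q)^{T}=Q(p,p-q)$. Consequently, reading the reversed-column quadrant with rows and columns interchanged does not merely rearrange the proof --- it changes the statement, replacing $q$ by $p-q$ (the same $q\leftrightarrow p-q$ duality that appears in the paper's subsequent theorem on $R^*(g,f)=R(h,f)$). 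To close the argument you must quote the actual definition of the $(p,q)$-super Patalan numbers from \cite{SUPERPAT_PUBLISHED}, note which index carries the sign $(-1)^{(\cdot)}$ and which carries the shift $-q/p$, and verify that your closed form for $D_{n,-m}$ matches with exactly that placement of indices; once the convention is pinned down this is a one-line check, but without it the proof could silently establish the transposed (hence $q\mapsto p-q$) statement instead.
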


See the comments about the generating functions of the columns of the doubly infinite matrix $E$ in the author's paper on the super Patalan numbers \cite{SUPERPAT_PUBLISHED}.

\begin{thm}
Let $p$ and $q$ be integers with $p\ge2$ and $0<q<p$. Let 
$g(x) = \frac{1}{(1-p^2x)^{q/p}}$, let 
$h(x) = \frac{1}{(1-p^2x)^{(p-q)/p}}$, and let 
$f(x) = \frac{-x}{1-p^2x}$.
Then the dual Riordan array
$R^*(g,f)=R(h,f)$.
\end{thm}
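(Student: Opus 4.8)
The plan is to derive this identity directly from the theorem of Luz\'{o}n et al.\ stated just above, which expresses the dual Riordan array as an ordinary Riordan array, $R^*(g,f)=R(\hat{f}\,g(\bar{f}),\bar{f})$, where $\bar{f}$ is the reversion of $f$ and $\hat{f}(x)=x\bar{f}'(x)/\bar{f}(x)$. Thus the whole problem collapses to two power series identities: that $\bar{f}=f$ and that $\hat{f}\,g(\bar{f})=h$. Before invoking the theorem I would check the hypotheses of Definition \ref{RIORDANDEF}: expanding $f(x)=-x/(1-p^2x)=-x-p^2x^2-\cdots$ shows $f_0=0$ and $f_1=-1\ne 0$, while $g(0)=h(0)=1$, so $g$ and $h$ are genuine power series.

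The one step requiring any insight is the observation that $f$ is its own reversion. I would verify $f(f(x))=x$ by direct substitution: the numerator simplifies to $x/(1-p^2x)$ and the denominator to $1-p^2 f(x)=1/(1-p^2x)$, whose quotient is $x$. Hence $\bar{f}=f$, and in particular $\bar{f}'=f'$. This collapses the composition $g(\bar{f})$ into an evaluation of $g$ at $f$, and converts $\hat{f}$ into the simpler expression $x f'(x)/f(x)$.

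With $\bar{f}=f$ the remaining computations are mechanical. I would obtain $f'(x)=-1/(1-p^2x)^2$ by the quotient rule and then form
\[
\hat{f}(x)=\frac{x f'(x)}{f(x)}=\frac{1}{1-p^2x}.
\]
Reusing the identity $1-p^2 f(x)=1/(1-p^2x)$ found above, I would evaluate $g(\bar{f})=g(f)=(1-p^2x)^{q/p}$, and then multiply:
\[
\hat{f}(x)\,g(f(x))=(1-p^2x)^{-1}(1-p^2x)^{q/p}=(1-p^2x)^{(q-p)/p},
\]
which is exactly $h(x)=(1-p^2x)^{-(p-q)/p}$. Substituting $\bar{f}=f$ and $\hat{f}\,g(\bar{f})=h$ into the Luz\'{o}n et al.\ formula then gives $R^*(g,f)=R(h,f)$.

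There is no genuine obstacle here; the statement is a short verification rather than a deep argument, with all of the content residing in the self-reversing property of $f$. The only point to watch is the bookkeeping of the fractional exponents, since $q/p$ and $(p-q)/p$ are non-integral and the powers of $1-p^2x$ must be read as formal binomial series; nonetheless the exponent arithmetic $q/p-1=(q-p)/p=-(p-q)/p$ is unambiguous and delivers the claim.
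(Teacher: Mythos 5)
Your proof is correct, and all the computations check out: $f$ is indeed its own reversion, $\hat{f}(x)=1/(1-p^2x)$, and $\hat{f}\,g(\bar{f})=(1-p^2x)^{(q-p)/p}=h$. The paper itself states this theorem \emph{without} proof, so there is nothing to compare against; your derivation via the Luz\'{o}n et al.\ formula $R^*(g,f)=R(\hat{f}g(\bar{f}),\bar{f})$, stated immediately before this theorem in the paper, is clearly the intended route and fills the gap completely.
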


For $q=2$ and $p=1$, this shows that the Riordan array $R\biggl(\frac{1}{\sqrt{1-4x}}, \frac{-x}{1-4x}\biggr)$ is self-dual.
The Riordan array $R\biggl(\frac{1}{\sqrt{1-4x}}, \frac{-x}{1-4x}\biggr)$
is OEIS sequence \href{https://oeis.org/A046521}{A046521} \cite{OEIS}.

For one more example with $q=3$ and $p=1$, we find that the Riordan array $R\biggl(\frac{1}{(1-9x)^{1/3}} \frac{-x}{1-9x}\biggr)$
is dual to the Riordan array
$R\biggl(\frac{1}{(1-9x)^{2/3}}, \frac{-x}{1-9x}\biggr)$. These are OEIS sequences \href{https://oeis.org/A283150}{A283150} and \href{https://oeis.org/A283151}{A283151} \cite{OEIS}.

\end{document}